\numberwithin{equation}{section}
\newtheorem{theorem}{Theorem}[section]
\newtheorem{lemma}[theorem]{Lemma}
\newtheorem{corollary}[theorem]{Corollary}
\newtheorem{proposition}[theorem]{Proposition}
\theoremstyle{definition}
\newtheorem{remark}[theorem]{Remark}
\crefname{section}{Section}{Sections}
\crefname{appendix}{Appendix}{Appendices}
\crefname{theorem}{Theorem}{Theorems}
\crefname{lemma}{Lemma}{Lemmas}
\crefname{corollary}{Corollary}{Corollaries}			
\crefname{proposition}{Proposition}{Propositions}	
\crefname{claim}{Claim}{Claims}
\crefname{conjecture}{Conjecture}{Conjectures}			
\crefname{definition}{Definition}{Definitions}
\crefname{problem}{Problem}{Problems}
\crefname{example}{Example}{Examples}
\crefname{remark}{Remark}{Remarks}
\crefname{figure}{Figure}{Figures}
\crefname{footnote}{Footnote}{Footnotes}
\crefname{equation}{}{}
\crefname{enumi}{}{}
\newcommand{\QED}{\hfill \ensuremath{\Box}}
\newcommand{\R}{\mathbb{R}}
\newcommand{\Q}{\mathbb{Q}}
\newcommand{\N}{\mathbb{N}}
\newcommand{\ld}{,\ldots,}
\newcommand{\ep}{\varepsilon}
\newcommand{\norm}[1]{\left\|#1\right\|}
\newcommand{\D}{\displaystyle}
\newfont{\bg}{cmr9 scaled\magstep2}
\newcommand{\bigzerol}{\smash{\lower1.0ex\hbox{\bg 0}}}
\newcommand{\al}{\alpha}
\newcommand{\Hess}{\mathrm{Hess}}
\DeclareMathOperator{\rank}{rank}
\DeclareMathOperator{\supp}{supp}
\title[
Non-density of $C^0$-stable mappings on non-compact manifolds
]
{
Non-density of $C^0$-stable mappings 
\\
on non-compact manifolds
}
\author{Shunsuke Ichiki
}
\address{
Department of Mathematical and Computing Science,
School of Computing,
Tokyo Institute of Technology,
Tokyo 152-8552,
Japan}
\email{ichiki@c.titech.ac.jp}
\begin{document}
\date{}
%%%%%%%%%%%%%%%%%% abstract %%%%%%%%%%%%%%%%%%%%%
\begin{abstract}
The problem of density of $C^0$-stable mappings is a classical and venerable subject in singularity theory.
In 1973, Mather showed that the set of proper $C^0$-stable mappings is dense in the set of all proper mappings, which implies that the set of $C^0$-stable mappings is dense in the set of all mappings if the source manifold is compact.
The aim of this paper is to complement Mather's result and to provide new information to the subject.
Namely, we show that the set of $C^0$-stable mappings is never dense in the set of all mappings if the source manifold is non-compact.
As a corollary of this result and Mather's result, we can obtain a characterization of density of $C^0$-stable mappings, i.e., the set of $C^0$-stable mappings is dense in the set of all mappings if and only if the source manifold is compact.
To prove the non-density result, we provide a more essential result by using the notion of topologically critical points. 
\end{abstract}
\subjclass[2020]{57R45, 58K30}
%58K25 Stability theory for manifolds
%58K30 Global theory of singularities
%57R35 Differentiable mappings
%57R45 Singularities of differentiable mappings
\keywords{$C^0$-stable mapping, Whitney $C^\infty$-topology, topologically critical point} 
%%%%%%%%%%%%%%%%%%%%%%%%%%%%%%%%%%%%%%% 
\maketitle
\noindent

%%%%%%%%%%%%%%%%% Introduction %%%%%%%%%%%%%%%
\section{Introduction}\label{sec:intro}
%%%%%%%%%%%%%%%%%%%%%%%%%%%%%%%%%%%%%%%
Let $N$ and $P$ be $C^\infty$-manifolds.
Two $C^\infty$-mappings $f$ and $g$ from $N$ into $P$ are said to be $C^\infty$-equivalent (resp. $C^0$-equivalent) if there exist $C^\infty$-diffeomorphisms (resp. homeomorphisms) $\Phi:N\to N$ and $\Psi :P\to P$ such that $g=\Psi \circ f \circ \Phi^{-1}$.  
We denote the space of all $C^\infty$-mappings of $N$ into $P$ (resp. the space of all proper $C^\infty$-mappings of $N$ into $P$) endowed with the Whitney $C^\infty$-topology by $C^\infty(N,P)$ (resp. $C^\infty_{pr}(N,P)$).
We say that $f$ is \emph{$C^\infty$-stable} (resp. \emph{$C^0$-stable}) if there exists an open neighborhood $\mathscr{U}$ of $f$ such that any mapping in $\mathscr{U}$ is $C^\infty$-equivalent (resp. $C^0$-equivalent) to $f$. 
In what follows, unless otherwise stated, all manifolds and mappings are of class $C^\infty$.

Before describing $C^0$-stability, which is the main topic of this paper, we first review the previous studies on density of $C^\infty$-stable mappings.
On the $C^\infty$-stability, in a celebrated series around 1970  \cite{Mather1968,Mather1968b,Mather1969,Mather1969b,Mather1970,Mather1971}, Mather established a significant theory and he gave a characterization of density of proper $C^\infty$-stable mappings in $C^\infty_{pr}(N,P)$ as follows:
\begin{theorem}[\cite{Mather1971}]\label{thm:mather}
Let $N$ and $P$ be manifolds of dimensions $n$ and $p$, respectively.
Then, the set of all proper $C^\infty$-stable mappings is dense in $C^\infty_{pr}(N,P)$ if and only if the pair $(n,p)$ satisfies one of the following conditions.
\begin{enumerate}[$(1)$]
\item 
$n<\frac{6}{7}p+\frac{8}{7}$ and $p-n\geq 4$
\item 
$n<\frac{6}{7}p+\frac{9}{7}$ and $3\geq p-n\geq 0$
\item 
$p<8$ and $p-n=-1$
\item 
$p<6$ and $p-n=-2$
\item 
$p<7$ and $p-n\leq-3$
\end{enumerate}
\end{theorem}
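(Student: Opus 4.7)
The plan is to prove the theorem in two directions, positive (the listed dimensions imply density) and negative (dimensions outside the list preclude density), using the machinery Mather developed in the series of papers cited.

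For the positive direction, I would first invoke Mather's characterization that for proper mappings $C^\infty$-stability is equivalent to infinitesimal stability, and that a proper mapping $f$ is $C^\infty$-stable if and only if every multi-germ of $f$ is stable. Stable multi-germs admit a classification via the $\mathcal{K}$-equivalence classes of their local algebras, and these classes correspond to a semi-algebraic stratification of the multi-jet space $J^k_s(N,P)$. The five listed pairs $(n,p)$ are exactly the \emph{nice dimensions}, meaning the closed set of non-stable multi-jets has codimension strictly greater than $\dim N$. The multi-jet transversality theorem (a Baire-type consequence of the Whitney $C^\infty$-topology) then yields a residual set of mappings in $C^\infty_{pr}(N,P)$ whose multi-jet extensions avoid this bad locus, and such mappings are infinitesimally stable, hence $C^\infty$-stable; properness is preserved because a small Whitney-$C^\infty$ perturbation of a proper mapping remains proper.

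For the negative direction, I would exhibit explicit obstructions in each dimension pair outside the list. The key phenomenon is the appearance of moduli: outside the nice range there exist map-germs whose $\mathcal{K}$-orbit has codimension exceeding $n$, so that arbitrarily close lie continuous families of pairwise non-$\mathcal{K}$-equivalent germs parametrized by a nontrivial cross-ratio-type invariant. Concretely, in each excluded pair $(n,p)$ one uses a known normal form (for example a germ of type $\Sigma^{i,j}$ with moduli, or a unimodal hypersurface germ) to construct a non-empty open semi-algebraic subset of the jet space containing no stable jet. Pulling this back to a non-empty open set $\mathscr{U}\subset C^\infty_{pr}(N,P)$ via a multi-jet section, one obtains an open set every element of which has at least one unstable multi-jet, hence contains no $C^\infty$-stable mapping.

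The main obstacle is not the transversality argument but the classification and codimension count that underlies both directions: one must show that in precisely the listed pairs $(n,p)$ the codimension of the non-nice stratum in $J^k_s(N,P)$ exceeds $n$, while in every other pair a modular stratum of codimension at most $n$ survives. This reduces to an exhaustive enumeration of $\mathcal{K}$-simple and unimodal germs with their prescribed codimensions, and it is this singularity-theoretic calculation, carried out across Mather's papers, that produces the explicit inequalities $n<\tfrac{6}{7}p+\tfrac{8}{7}$, $n<\tfrac{6}{7}p+\tfrac{9}{7}$, and the sporadic low-codimensional cases. I would therefore treat the transversality and moduli arguments as black-box steps and devote the bulk of the proof to carefully exposing this codimension tally and the associated normal forms.
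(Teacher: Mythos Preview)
The paper does not prove this statement at all: Theorem~\ref{thm:mather} is a classical result of Mather, cited from \cite{Mather1971}, and is included purely as background. The paper's own contribution begins at Theorem~\ref{thm:main} and Theorem~\ref{thm:main_a}, which concern $C^0$-stability on non-compact manifolds and have nothing to do with establishing the nice-dimension inequalities. So there is no proof in the paper for you to be compared against.

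That said, your sketch is a broadly accurate outline of the strategy Mather actually uses across \cite{Mather1968,Mather1968b,Mather1969,Mather1969b,Mather1970,Mather1971}: the equivalence of $C^\infty$-stability with infinitesimal stability for proper maps, the reduction to stability of multi-germs, the stratification of jet space by $\mathcal{K}$-orbits, multijet transversality for the positive direction, and the appearance of moduli for the negative direction. You correctly identify that the substance lies in the codimension calculation producing the explicit inequalities, and you are honest that you would treat this as a black box. As a reading guide to Mather's work the sketch is reasonable; as a self-contained proof it is not, since the codimension tally and the construction of modular strata are precisely the hard content.
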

A dimension pair $(n,p)$ is called \emph{nice} if it satisfies one of the conditions (1)--(5) in \cref{thm:mather}.
Since $C^\infty_{pr}(N,P)=C^\infty(N,P)$ in the case where $N$ is compact, \cref{thm:mather} yields a characterization of density of $C^\infty$-stable mappings in $C^\infty(N,P)$ in the case.

After that, the case where a source manifold is non-compact was considered by Dimca, and in 1979,
he gave the following result.
\begin{proposition}[\cite{Dimca1979}]\label{thm:dimca}
Let $N$ be a non-compact manifold.
Then, the set of all $C^\infty$-stable mappings is not dense in $C^\infty(N,\R)$.
\end{proposition}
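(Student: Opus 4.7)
The plan is to exhibit a non-empty open subset of $C^\infty(N,\mathbb{R})$ that contains no $C^\infty$-stable mapping. The starting point is the classical characterization---the $p=1$ specialization of Mather's infinitesimal stability criterion combined with standard Morse theory: $f\in C^\infty(N,\mathbb{R})$ is $C^\infty$-stable if and only if $f$ is a Morse function whose critical values are pairwise distinct. Since on a non-compact source the Whitney $C^\infty$-topology is strictly finer than the compact-open topology, the natural strategy is to localize the construction near a sequence of points escaping to infinity, where the topology is most restrictive and yet where one has the greatest freedom to prescribe independent local data.

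Using non-compactness, I would first fix a discrete sequence $\{x_n\}_{n\geq 1}\subset N$ with no accumulation point in $N$, and pairwise disjoint coordinate neighborhoods $(V_n,\varphi_n)$ centered at the $x_n$. I would then construct $f\in C^\infty(N,\mathbb{R})$ so that, in each chart $V_n$, the function $f$ has a pair of non-degenerate critical points of distinct Morse indices sharing a common critical value, and so that $f$ has no further critical points outside $\bigcup_n V_n$. By construction, $f$ already has coincident critical values and is therefore not itself $C^\infty$-stable. Next, choosing a continuous $\epsilon:N\to(0,\infty)$ that decays sufficiently rapidly along $\{x_n\}$, I would take $\mathscr{U}$ to be the Whitney $C^\infty$-neighborhood of $f$ of size $\epsilon$. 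Applying the implicit function theorem to $dg=0$ in each $V_n$, every $g\in\mathscr{U}$ inherits the same pair of non-degenerate critical points in $V_n$, with critical values $a_n(g)$ and $b_n(g)$ whose difference is bounded by $2\epsilon(x_n)$ and therefore tends to zero as $n\to\infty$.

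The central step, and the main obstacle, is to deduce that every $g\in\mathscr{U}$ is $C^\infty$-unstable. If $a_n(g)=b_n(g)$ for some $n$ then $g$ violates Morse-distinctness directly; otherwise, given any Whitney neighborhood $\mathscr{V}$ of $g$ with size function $\delta$, I would produce $\widetilde{g}\in\mathscr{V}$ by a localized bump perturbation near $x_{n_0}$ that coalesces $a_{n_0}(g)$ with $b_{n_0}(g)$, yielding a non Morse-distinct $\widetilde{g}$ not $C^\infty$-equivalent to the Morse-distinct $g$. The delicate point is that the required perturbation size $|a_{n_0}(g)-b_{n_0}(g)|$ must fit within $\delta(x_{n_0})$, while $\delta$ can in principle decay arbitrarily fast at infinity. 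Overcoming the interplay between the decay of $\epsilon$ built into $\mathscr{U}$ and the adversarial decay of $\delta$ is the principal technical hurdle; it is typically handled by coordinating simultaneous small shifts across several $V_n$, or by appealing to richer $C^\infty$-invariants---such as the pairing of Morse indices with the relative order of critical values---that can be broken with an even smaller Whitney budget. Once this step is carried out, it follows that $\mathscr{U}$ contains no $C^\infty$-stable function, establishing non-density.
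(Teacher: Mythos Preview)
You have correctly isolated the crux, but the proposal does not close it, and neither of the two suggested remedies does either. Once $\mathscr{U}$ (hence $\epsilon$) is fixed and $g\in\mathscr{U}$ is given, the adversary is free to choose a Whitney size function $\delta$ with $\delta(x_n)<|a_n(g)-b_n(g)|$ for every $n$; nothing in your construction prevents this, since $|a_n(g)-b_n(g)|$ is a fixed positive sequence once $g$ is chosen. ``Coordinating shifts across several $V_n$'' does not help, because each pair lives in a single $V_n$ and the coalescing perturbation in that chart still costs $|a_n(g)-b_n(g)|$ in $C^0$-size there. The ``order of critical values paired with Morse indices'' invariant faces the identical budget problem: swapping the order at index $n$ again requires a perturbation of size comparable to $|a_n(g)-b_n(g)|$ in $V_n$. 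The underlying defect is that in your scheme the \emph{partner} of each critical point is pre-assigned (its twin in the same $V_n$), so the required perturbation size is fixed before $\delta$ is revealed.

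The paper does not prove \cref{thm:dimca} separately (it is quoted from Dimca), but its main construction in \cref{sec:proof}, which yields the stronger \cref{thm:main_a} and hence \cref{thm:dimca} as a special case, shows exactly how to escape this trap. Instead of placing a \emph{pair} of critical points in each chart, one places a \emph{single} nondegenerate minimum $q_\alpha$ in each $U_\alpha'$ and arranges the critical \emph{values} $f(q_\alpha)=\gamma(\alpha)$ to enumerate $\mathbb{Q}$, hence to be \emph{dense} in $\mathbb{R}$. For any $g\in\mathscr{U}$ the perturbed critical values $\{g(q_\alpha')\}$ remain dense (\cref{thm:u}\cref{thm:u_dense}). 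Now the order of quantifiers reverses in your favour: given an arbitrary Whitney neighbourhood $\mathscr{V}$ of $g$, one first fixes any index $\alpha_1$ and converts the local $\delta$-budget near $U_{\alpha_1}$ into a radius $r_{\alpha_1}'>0$ via \cref{thm:contain}; density of $\{g(q_\alpha')\}$ then produces another index $\alpha_2$ with $|g(q_{\alpha_2}')-g(q_{\alpha_1}')|<r_{\alpha_1}'$ (\cref{thm:summary}), and a bump in $U_{\alpha_1}$ of that size lies in $\mathscr{V}$ and coalesces the two critical values. The missing idea in your attempt is precisely this density of critical values, which lets the partner index be chosen \emph{after} $\delta$.
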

Then, in \cite{Ichiki2022}, for a non-compact source manifold $N$ and an arbitrary target manifold $P$, the following has been shown rigorously:
\begin{theorem}[\cite{Ichiki2022}]\label{thm:main2}
Let $N$ be a non-compact manifold, and $P$ a manifold.
Then, the set of all $C^\infty$-stable mappings is never dense in $C^\infty(N,P)$.
\end{theorem}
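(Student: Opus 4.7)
My plan is to produce, for any non-compact $N$ and any manifold $P$, a mapping $f_0\in C^\infty(N,P)$ together with an open Whitney $C^\infty$-neighborhood $\mathscr{U}$ of $f_0$ in which no element is $C^\infty$-stable. The guiding principle is that on a non-compact source the Whitney topology lets one prescribe rigid local behavior simultaneously on a countable discrete set of points escaping every compact subset of $N$, and this much data is enough to defeat the rigidity required by $C^\infty$-stability.

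First, exploiting non-compactness, I would pick a proper exhaustion $K_1\subset K_2\subset \cdots$ of $N$ by compacta, choose points $x_i\in K_{i+1}\setminus K_i$, and surround them by a locally finite family of pairwise disjoint coordinate charts $V_i$. Then I would define $f_0$ to be constant outside $\bigcup_i V_i$ and to carry inside each $V_i$ a prescribed polynomial germ at $x_i$. The germs would be chosen so that their jets possess a feature preserved under small Whitney perturbations and trackable at infinity; for instance, each $x_i$ would be a singularity whose local contribution to some coarse invariant (a critical value, or the local topological type of a fibre) is controlled.

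Second, using the Whitney $C^\infty$-topology I would define $\mathscr{U}$ by requiring $j^{k_i}g$ to approximate $j^{k_i}f_0$ within a tolerance $\ep_i$ on a compact neighborhood of $x_i$, with $k_i\to\infty$ and $\ep_i\to 0$; local finiteness of $\{V_i\}$ makes this a legitimate Whitney basic open set. For sufficiently refined data, every $g\in\mathscr{U}$ exhibits near each $x_i$ the prescribed germ-theoretic feature, so $g$ inherits from $f_0$ an entire infinite sequence of features accumulating at infinity.

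Finally, to rule out stability I would argue as follows: if some $g\in\mathscr{U}$ were $C^\infty$-stable, there would exist a Whitney neighborhood $\mathscr{U}_g$ of $g$ consisting of mappings $C^\infty$-equivalent to $g$. A suitably small perturbation $\widetilde g\in\mathscr{U}_g$, supported inside a single $V_i$ for large $i$, can however be engineered to alter a global discrete invariant of $g$; for example, in the case $P=\R$, shifting the critical value at $x_i$ by an arbitrarily small generic amount changes the set of critical values, which is a $C^\infty$-equivalence invariant. This contradicts stability of $g$. The main obstacle, and the heart of the argument, is identifying a global $C^\infty$-equivalence invariant that is genuinely altered by a localized perturbation near $x_i$ and that survives the freedom of choosing the diffeomorphisms $\Phi$ and $\Psi$; this is straightforward for $P=\R$ via the set of critical values, but in higher dimensions it requires a dimension-dependent analysis, using invariants such as the topology of the discriminant or multi-jet configurations at infinity.
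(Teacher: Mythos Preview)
Your framework is sound: exploit non-compactness to plant infinitely many prescribed local singularities on a locally finite family of charts escaping to infinity, and use the Whitney $C^\infty$-topology to force these features to persist. But the argument has a genuine gap at precisely the step you yourself identify as ``the main obstacle.''

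First, the invariant you invoke for $P=\R$ is not a $C^\infty$-equivalence invariant. If $g=\Psi\circ f\circ\Phi^{-1}$, the set of critical values of $g$ is $\Psi$ applied to that of $f$; only its diffeomorphism type inside $\R$ is preserved, not the set itself. Shifting one critical value among infinitely many by a small generic amount need not change that diffeomorphism type, so your perturbation $\widetilde g$ is not shown to leave the equivalence class of $g$. Second, for general $P$ you explicitly leave the problem open, offering only a menu of possible invariants (discriminant topology, multi-jet configurations) without carrying any of them through. Since the statement is for arbitrary $P$, this is not yet a proof.

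The paper (following the original argument in \cite{Ichiki2022}, here sharpened to topologically critical points) resolves this with a single idea that works uniformly in all dimensions. Rather than looking for a delicate invariant distinguishing $g$ from $\widetilde g$, the construction arranges the critical values of $f$ to enumerate $\Q^p$ via a bijection $\gamma:\N\to\Q^p$, and then shows that for every $g\in\mathscr{U}$ the images $g(q_\alpha')$ of the persisting critical points are \emph{dense} in a coordinate chart of $P$. Density is the crucial gain: for any $d\ge2$ one can now perturb $g$ within $\mathscr{U}$ (by bump-supported translations near finitely many $q_{\alpha_j}'$) so that $d$ of these critical points share a common image. The property ``$d$ critical points with the same image'' \emph{is} a genuine $C^\infty$-equivalence invariant, and taking $d=p+1$ contradicts the density of mappings with normal crossings. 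So the invariant you were seeking is coincidence of critical images, and the missing ingredient in your construction is the density of critical values that makes such coincidences achievable by arbitrarily small perturbations.
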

By combining Mather's theorem (\cref{thm:mather}) and \cref{thm:main2}, we obtain the following characterization of density of $C^\infty$-stable mappings in $C^\infty(N,P)$ in the case where $N$ is not necessarily compact.
\begin{corollary}[\cite{Ichiki2022,Mather1971}]\label{thm:combine}
Let $N$ and $P$ be manifolds of dimensions $n$ and $p$, respectively.
Then, the set of all $C^\infty$-stable mappings is dense in $C^\infty(N,P)$ if and only if $N$ is compact and $(n,p)$ is nice.
\end{corollary}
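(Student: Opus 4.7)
The plan is to deduce this corollary as a direct combination of \cref{thm:mather} and \cref{thm:main2}, using the elementary observation that when $N$ is compact, every $C^\infty$-mapping from $N$ into $P$ is automatically proper, so $C^\infty_{pr}(N,P)=C^\infty(N,P)$ as topological spaces (under the Whitney $C^\infty$-topology), and a mapping is $C^\infty$-stable if and only if it is proper $C^\infty$-stable.

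For the backward direction, I would assume $N$ is compact and $(n,p)$ is nice. Then by the compactness remark above, $C^\infty_{pr}(N,P)=C^\infty(N,P)$, and \cref{thm:mather} applies to yield density of proper $C^\infty$-stable mappings in $C^\infty_{pr}(N,P)$. Since every proper $C^\infty$-stable mapping is $C^\infty$-stable, the set of $C^\infty$-stable mappings is dense in $C^\infty(N,P)$.

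For the forward direction, I would argue contrapositively. Suppose either $N$ is non-compact, or $N$ is compact but $(n,p)$ is not nice. In the first case, \cref{thm:main2} directly gives that the set of $C^\infty$-stable mappings is not dense in $C^\infty(N,P)$. In the second case, compactness of $N$ gives $C^\infty_{pr}(N,P)=C^\infty(N,P)$ and identifies $C^\infty$-stability with proper $C^\infty$-stability, so \cref{thm:mather} implies that the set of proper $C^\infty$-stable mappings is not dense in $C^\infty_{pr}(N,P)$; transporting this back yields that the set of $C^\infty$-stable mappings is not dense in $C^\infty(N,P)$.

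There is no real obstacle here: the statement is by design a synthesis of the two preceding theorems, and the only point that needs a brief verification is the equivalence between $C^\infty$-stability and proper $C^\infty$-stability on a compact source, which is immediate from the definitions since properness is automatic and the two function spaces coincide as topological spaces.
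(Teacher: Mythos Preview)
Your argument is correct and is exactly the approach the paper indicates: the corollary is simply obtained by combining \cref{thm:mather} with \cref{thm:main2}, using the fact that $C^\infty_{pr}(N,P)=C^\infty(N,P)$ when $N$ is compact. The paper gives no further proof beyond this remark, so your write-up matches its intended reasoning.
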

The problem of density of $C^0$-stable mappings is also a classical and venerable subject in singularity theory.
On the $C^0$-stability, Mather established the following theorem in 1973, which is so significant as well as \cref{thm:mather}, and the bibles \cite{Wall1995,Gibson1976,Ruas2022} have been published.
\begin{theorem}[\cite{Mather1973a}]\label{thm:mather_t}
Let $N$ and $P$ be manifolds.
Then, the set of all proper $C^0$-stable mappings is dense in $C^\infty_{pr}(N,P)$.
\end{theorem}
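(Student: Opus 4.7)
The plan is to follow the Thom--Mather program for topological stability, which reduces $C^0$-stability to multi-jet transversality with respect to a canonical Whitney stratification of the multi-jet space ${}_sJ^k(N,P)$. Concretely, I would (i) construct a Whitney stratification of ${}_sJ^k(N,P)$ whose strata parametrize common $C^0$-equivalence types of multi-germs, (ii) show that any proper mapping whose $s$-fold $k$-jet extension $j^k_s f$ is transverse to every stratum is $C^0$-stable, and (iii) invoke the multi-jet transversality theorem to conclude density in $C^\infty_{pr}(N,P)$. Step (iii) is the standard Thom--Mather transversality result; steps (i) and (ii) constitute the substance of the argument.

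For step (ii), suppose $j^k_s f$ is transverse to all strata and let $g$ be a sufficiently small perturbation of $f$ in $C^\infty_{pr}(N,P)$. Openness of transversality guarantees that $j^k_s g$ is also transverse. Interpolating via a path $f_t$ joining $f$ to $g$, one obtains a family whose joint multi-jet extension is transverse to the stratification crossed with $[0,1]$. Thom's first isotopy lemma produces a stratified trivialization on the target side, and Thom's second isotopy lemma lifts it along $f$ to a stratified trivialization on the source; properness of $f$ is essential here to integrate the controlled vector fields globally over the non-compact source. Extracting the endpoint of the resulting isotopy yields homeomorphisms $\Phi: N \to N$ and $\Psi: P \to P$ satisfying $g = \Psi \circ f \circ \Phi^{-1}$, establishing $C^0$-stability.

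The principal obstacle is step (i): producing a Whitney stratification of ${}_sJ^k(N,P)$ whose strata genuinely control the $C^0$-type of the associated multi-germs. Smooth contact-equivalence orbits do not in general form a locally finite stratification, and naive topological orbits need not satisfy Whitney's regularity conditions. Mather's construction starts from the Thom--Boardman filtration, intersects with the contact-orbit decomposition, and iteratively refines until both Whitney regularity and constancy of topological type within each stratum are achieved. The nontrivial assertion is precisely the latter: that transversality to this canonical stratification --- a condition expressible in terms of finitely many algebraic invariants of the jet --- suffices to force constancy of the purely topological equivalence class of the multi-germ. Establishing this requires the full Thom--Mather unfolding theory and a careful treatment of controlled vector fields in the semi-algebraic setting, and is the deep technical core of Mather's 1973 proof.
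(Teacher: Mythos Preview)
The paper does not prove this theorem at all: \cref{thm:mather_t} is stated with a citation to Mather's 1973 article \cite{Mather1973a} and is used only as background (to derive, together with \cref{thm:main}, the corollary characterizing density of $C^0$-stable mappings). There is therefore no ``paper's own proof'' to compare against.

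That said, your outline is a faithful high-level sketch of Mather's original argument: the canonical Whitney stratification of multi-jet space, multi-transversality as a generic and open condition on proper maps, and the isotopy lemmas to upgrade transversality to $C^0$-stability. Your identification of step~(i) as the hard core is accurate, and your remark that properness is what allows global integration of the controlled vector fields is the right place to locate the hypothesis. If you intend to actually carry this out rather than merely cite it, be aware that the details---particularly the construction of the stratification with the required topological triviality along strata and the verification of Thom's $a_f$ condition needed for the second isotopy lemma---occupy the bulk of \cite{Mather1973a} and the expositions \cite{Gibson1976,Wall1995}; a self-contained proof is well beyond a paragraph.
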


\cref{thm:mather_t} implies that the set of all $C^0$-stable mappings is always dense in $C^\infty(N,P)$ if $N$ is compact.
The aim of this paper is to complement Mather's significant theorem (\cref{thm:mather_t}) and to provide new information to the classical and venerable subject in singularity theory, by considering the case where $N$ is non-compact as follows: 
\begin{theorem}\label{thm:main}
Let $N$ be a non-compact manifold, and $P$ a manifold.
Then, the set of all $C^0$-stable mappings is never dense in $C^\infty(N,P)$.
\end{theorem}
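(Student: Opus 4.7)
My plan is to extract a $C^0$-invariant of mappings that is rigid under small Whitney perturbations of $C^0$-stable maps yet flexible under such perturbations on a non-compact source, so that a short contradiction argument yields non-density. Following the hint in the abstract I would introduce \emph{topologically critical points}: a point $x \in N$ is topologically critical for $f : N \to P$ if no open neighborhoods $U \ni x$, $V \ni f(x)$ admit homeomorphisms $\Phi : U \to \R^n$ and $\Psi : V \to \R^p$ under which $\Psi \circ f \circ \Phi^{-1}$ coincides with the standard linear model for the dimension pair $(n,p)$ (a projection if $n>p$, an inclusion if $n<p$, or the identity if $n=p$). By construction the set of topologically critical points is preserved by $C^0$-equivalence: the source homeomorphism appearing in any $C^0$-equivalence $g=\Psi\circ f\circ\Phi^{-1}$ must carry topologically critical points of $f$ bijectively onto those of $g$.

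From this I would prove the essential intermediate statement: if $f$ is $C^0$-stable, then every mapping in some Whitney neighborhood of $f$ has a topologically critical set in bijection with that of $f$; in particular, the topological type of the set, together with any discrete invariant attached to it (its cardinality if finite, the multiset of local topological types of its points, and so on), is locally constant in a Whitney neighborhood of $f$.

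To finish, I would use the non-compactness of $N$ to violate this invariance in every Whitney neighborhood of every map belonging to a suitable open set. Fix a sequence of pairwise disjoint coordinate balls $B_i \subset N$ escaping to infinity. The Whitney $C^\infty$-topology is coarse enough on a non-compact manifold that, for any $g$ and any Whitney neighborhood $\mathscr{W}$ of $g$, an arbitrarily $C^k$-small smooth bump supported inside a single $B_i$ keeps the perturbed map inside $\mathscr{W}$. A suitably chosen bump creates a new topologically critical point inside $B_i$ while leaving $g$ untouched elsewhere, producing two mappings in $\mathscr{W}$ whose topologically critical sets differ in a $C^0$-invariant quantity. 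By the essential statement this contradicts $C^0$-stability of $g$, and carrying the construction out over an appropriate open set of base mappings yields a non-empty open set in $C^\infty(N,P)$ containing no $C^0$-stable map.

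The main obstacle is the local perturbation step: for arbitrary dimensions $(n,p)$ and an arbitrary base mapping $g$, one must exhibit a $C^k$-small smooth bump supported in a single ball that provably introduces a genuine topological obstruction to local triviality, certified by a robust topological invariant (for instance the local homology of fibers, a local topological degree, or failure of a local product structure). Since topological criticality is strictly coarser than $C^\infty$-criticality, the detection cannot simply appeal to the differential; the model local construction must be shown to remain genuinely singular under the ambient perturbation. Once this ingredient is in hand, the remainder of the proof reduces to standard Whitney-topology bookkeeping along the escaping sequence $\{B_i\}$.
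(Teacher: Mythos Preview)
Your high-level plan is sound: use topologically critical points as a $C^0$-invariant and exploit noncompactness of $N$ through bumps supported in balls escaping to infinity. The gap is in the perturbation step, and it is not merely the technical obstacle you flag at the end. You propose to \emph{create} a new topologically critical point inside some $B_i$ by a $C^k$-small bump; but this is impossible in general. Near a point where $g$ is already topologically regular (e.g.\ $g=\id:\R\to\R$), every $C^1$-small perturbation remains a local submersion and hence locally open, so no topologically critical point appears. Topological regularity is an open condition on jets, and small bumps cannot manufacture criticality from nothing. Read literally, your argument would apply to every $g$ and prove that no map whatsoever is $C^0$-stable, contradicting Mather's density of proper $C^0$-stable maps. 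You do mention restricting to a ``suitable open set'', but you give no indication of what that set is or why the creation step succeeds there, and the model argument you sketch does not.

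The paper sidesteps this obstruction by never creating critical points. It first builds a specific $f$ and a Whitney-open neighbourhood $\mathscr{U}$, defined via a $2$-jet condition (positive-definite Hessian of one target coordinate on each of infinitely many disjoint balls $U'_\alpha$), which forces \emph{every} $g\in\mathscr{U}$ to already carry a topologically critical point $q'_\alpha\in U'_\alpha$, and moreover arranges that the images $\{g(q'_\alpha)\}$ are dense in a chart of $P$. The perturbation step then merely \emph{translates} $g$ by small constants on finitely many $U'_\alpha$ so that any prescribed number $d$ of these pre-existing critical points acquire a common image; density of the $g(q'_\alpha)$ guarantees the required translations can be taken small enough to stay in any given Whitney neighbourhood. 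Consequently the $C^0$-invariant that gets violated is not the cardinality or local type of the topological critical set (infinite for every $g\in\mathscr{U}$, so useless here) but the maximal number of topologically critical points sharing a single value: it can be pushed to $p+1$ densely in $\mathscr{U}$, whereas for the dense class of maps with normal crossings it cannot exceed $p$. Both the correct open set and the correct invariant differ from what you outline.
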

Furthermore, by combining \cref{thm:mather_t} and \cref{thm:main}, we can obtain the following characterization of density of $C^0$-stable mappings.
\begin{corollary}
Let $N$ and $P$ be manifolds.
Then, the set of all $C^0$-stable mappings is dense in $C^\infty(N,P)$ if and only if $N$ is compact.
\end{corollary}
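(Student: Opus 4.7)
The plan is to derive the corollary by combining \cref{thm:mather_t} with \cref{thm:main}, handling the two directions separately. Neither direction requires any new technical work beyond these two results.

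For the ``if'' direction, suppose $N$ is compact. I will first observe that every continuous, hence every $C^\infty$, mapping $f\colon N\to P$ is automatically proper: the preimage of any compact subset of $P$ is closed in the compact space $N$ and is therefore compact. Consequently $C^\infty_{pr}(N,P)=C^\infty(N,P)$, both as sets and as topological spaces, since both are equipped with the Whitney $C^\infty$-topology on the same underlying set. Applying Mather's theorem (\cref{thm:mather_t}) then yields that the set of proper $C^0$-stable mappings is dense in $C^\infty_{pr}(N,P)=C^\infty(N,P)$. Because every element of $C^\infty(N,P)$ is proper, the notions ``proper $C^0$-stable'' and ``$C^0$-stable'' coincide in this case, and density of $C^0$-stable mappings follows.

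For the ``only if'' direction, I will argue by contraposition and invoke \cref{thm:main} directly: if $N$ is non-compact, then the set of $C^0$-stable mappings is not dense in $C^\infty(N,P)$. Contrapositively, density in $C^\infty(N,P)$ forces $N$ to be compact.

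The only point worth brief comment is the equivalence of ``proper $C^0$-stable'' and ``$C^0$-stable'' when $N$ is compact. A priori, $C^0$-stability of $f$ demands an open neighborhood in $C^\infty(N,P)$ all of whose elements are $C^0$-equivalent to $f$, whereas proper $C^0$-stability demands such a neighborhood inside $C^\infty_{pr}(N,P)$; but as the two function spaces coincide for compact $N$, so do the two notions of stability. With this trivial observation in hand, there is no genuine obstacle to overcome: all substantive difficulty is absorbed by \cref{thm:mather_t} and \cref{thm:main}, and the corollary emerges as a clean combination of the two.
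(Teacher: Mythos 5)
Your proposal is correct and follows exactly the route the paper intends: the ``if'' direction comes from \cref{thm:mather_t} together with the observation that $C^\infty_{pr}(N,P)=C^\infty(N,P)$ for compact $N$, and the ``only if'' direction is the contrapositive of \cref{thm:main}. The paper itself presents the corollary as an immediate combination of these two theorems, so there is nothing to add.
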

\begin{remark}
We give the following remarks on \cref{thm:main}.
\begin{enumerate}
    \item 
    In \cref{sec:preparation}, we state a more essential result (see \cref{thm:main_a}) than \cref{thm:main} by using the notion of topologically critical points, which is the main theorem of this paper, and \cref{thm:main} follows from the result as a corollary.
    \item
    \cref{thm:main} implies \cref{thm:main2} since any $C^\infty$-stable mapping is $C^0$-stable.
\end{enumerate}
\end{remark}
The remainder of this paper is organized as follows.
In \cref{sec:preparation}, we state the main theorem, which is a more essential result than \cref{thm:main}, and preparations for the proof.
In \cref{sec:proof}, we show the main theorem.

%%%%%%%%%%%%%%%%%%%%%%%%%%%%%%%%%%%%%%%%%%%%%%%%%% 
\section{Main theorem and preparations for the proof}\label{sec:preparation}
Let $N$ and $P$ be manifolds, and $f : N\to P$ a mapping.
A point $q\in N$ is called a \emph{topologically critical point} (resp. \emph{critical point}) of $f$ if $f|_U:U\to P$ is not an open mapping for any open neighborhood $U$ of $q$ (resp. $\rank df_q<\dim P$).
\begin{remark}\label{rem:top}
We give the following remarks on topologically critical points.
\begin{enumerate}
\item
    If $q\in N$ is a topologically critical point of $f:N\to P$, then $q$ is a critical point as follows:
    Suppose that $q\in N$ is not a critical point.
    Then, since $\rank df_q\geq \dim P$, there exists an open neighborhood $U$ such that $f|_U:U\to P$ is an open mapping by the implicit function theorem, which contradicts the hypothesis that $q$ is a topologically critical point.
    \item
    A critical point is not necessarily a topologically critical point.
    For example, let $f:\R\to \R$ be the function defined by $f(x)=x^3$.
    Then, $x=0$ is a critical point of $f$, however the point is not a topologically critical point.
     \item \label{rem:top_p}
     Suppose that $f,g\in C^\infty(N,P)$ are $C^0$-equivalent, that is, $g=\Psi \circ f \circ \Phi^{-1}$, where $\Phi:N\to N$ and $\Psi:P\to P$ are homeomorphisms.
     If $q\in N$ is a topologically critical point of $f$, then $\Phi(q)$ is a topologically critical point of $g$.
     Namely, topologically critical points can be preserved by homeomorphisms although usual critical points are not necessarily preserved by them.
    \end{enumerate}
\end{remark}
The following is the main theorem, and \cref{thm:main} is an easy consequence of this theorem.
\begin{theorem}\label{thm:main_a}
    Let $N$ be a non-compact manifold, and $P$ a manifold.
    Then, there exists a non-empty open subset $\mathscr{U}$ of $C^\infty(N,P)$ such that for any positive integer $d$ satisfying $d\geq 2$,
    \begin{align*}
        \mathscr{U}_d=\set{f\in \mathscr{U}|\mbox{$f$ has $d$ topologically critical points with the same image}}
    \end{align*}
    is dense in $\mathscr{U}$.
    In particular, any mapping in $\mathscr{U}$ is not $C^0$-stable.
\end{theorem}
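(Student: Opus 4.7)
The plan is to build an explicit non-empty open $\mathscr{U} \subset C^\infty(N, P)$ as a Whitney neighborhood of a carefully assembled map $f_0$, then obtain density of each $\mathscr{U}_d$ by small localized Whitney perturbations, and finally deduce non-$C^0$-stability by a $C^0$-invariance argument based on Remark~\ref{rem:top}\,\cref{rem:top_p}.

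First, I would exploit the non-compactness of $N$ to choose a closed discrete sequence $\{q_k\}_{k \geq 1} \subset N$ escaping every compact set, pairwise disjoint relatively compact coordinate charts $V_k \ni q_k$, and a coordinate chart $W \ni y_0 \in P$. I would fix a local model $\phi$ with $0$ as a $C^2$-stable topologically critical point of image $0$---for $n \geq p$ one may take $\phi(x) = (x_1^2 + \cdots + x_{n-p+1}^2, x_{n-p+2}, \ldots, x_n)$, while for $n < p$ every point is automatically topologically critical by dimension count---and define $f_0$ by inserting $\phi$ (through the charts, shifted to have image $y_0$) on a small ball about each $q_k$, with $f_0 \equiv y_0$ outside. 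Taking $\mathscr{U}$ to be a basic Whitney open neighborhood of $f_0$ controlled by a continuous $\epsilon_0 : N \to (0, \infty)$ that is small enough and satisfies $\epsilon_0(q_k) \to 0$, one arranges that each $f \in \mathscr{U}$ carries, for every $k$, a topologically critical point $q_k^f \in V_k$ with image $y_k^f \in W$ converging to $y_0$. Hence $\mathscr{U}$ is non-empty and open.

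For density of $\mathscr{U}_d$ in $\mathscr{U}$, given $f \in \mathscr{U}$, $d \geq 2$, and a Whitney neighborhood $\mathscr{V} \subset \mathscr{U}$ of $f$ with tolerance $\epsilon : N \to (0, \infty)$, I would select $d$ indices $k_1 < \cdots < k_d$ and a common value $y^* \in W$ with $|y_{k_i}^f - y^*| < \epsilon(q_{k_i})$ for all $i$, using a pigeonhole/clustering argument on the convergent sequence $y_k^f \to y_0$ (the freedom to move $y^*$ off $y_0$ is essential). With $\{k_i, y^*\}$ in hand, I would define $g \in \mathscr{V}$ by adding to $f$, on each $V_{k_i}$, a bump perturbation $\chi_{k_i}(x)(y^* - y_{k_i}^f)$ (read in $W$-coordinates, with $\chi_{k_i}$ supported near $q_{k_i}^f$ and $\chi_{k_i}(q_{k_i}^f) = 1$). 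By construction $g(q_{k_i}^f) = y^*$ for each $i$, and each $q_{k_i}^f$ remains topologically critical since the bump has a critical point at its peak, preserving the image-half-space structure inherited from the fold. A further generic small perturbation makes the remaining (infinitely many) topologically critical images pairwise distinct and distinct from $y^*$, so that $g \in \mathscr{U}_d \cap \mathscr{V}$ in fact possesses exactly $d$ topologically critical points with common image.

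For the non-$C^0$-stability conclusion, suppose for contradiction that some $f \in \mathscr{U}$ were $C^0$-stable with stability neighborhood $\mathscr{W}$. For each $d \geq 2$, the construction yields $g_d \in \mathscr{W}$ with the count
\[
    N_k(h) := \#\{y \in P : \#(\mathrm{TopCrit}(h) \cap h^{-1}(y)) \geq k\}
\]
satisfying $N_d(g_d) \geq 1$ and $N_{d+1}(g_d) = 0$; by Remark~\ref{rem:top}\,\cref{rem:top_p} these invariants transfer to $f$ via the equivalence, giving $N_{d+1}(f) = 0$ from $f \sim g_d$ and $N_{d+1}(f) \geq 1$ from $f \sim g_{d+1}$, a contradiction. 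The main obstacle is the quantitative pigeonhole in the density step: for an arbitrary positive continuous $\epsilon$, producing the required $d$ indices and common $y^*$ relies on the non-compactness of $N$ (an infinite supply of candidate indices), the tunable clustering $y_k^f \to y_0$ (from the design of $\epsilon_0$), and the flexibility to adapt $y^*$ to the particular sequence at hand.
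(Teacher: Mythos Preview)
Your overall architecture---build a model map, take a Whitney neighbourhood $\mathscr{U}$, and shift finitely many critical values together by compactly supported bumps---matches the paper's, but the density step contains a genuine gap. In your design the topologically critical values satisfy only $y_k^f\in B(y_0,\epsilon_0(q_k))$, and the adversary chooses both $f\in\mathscr{U}$ and the Whitney tolerance $\epsilon$ \emph{after} seeing $\epsilon_0$. Concretely (say $p=1$, $\epsilon_0(q_k)=1/k$), the adversary may take $y_k^f=y_0+1/(2k)$ and allow only shifts of size $\tau_k=1/(16k^2)$ near $q_k$; then for every pair $k_1<k_2$ one checks $|y_{k_1}^f-y_{k_2}^f|=\tfrac{k_2-k_1}{2k_1k_2}>\tau_{k_1}+\tau_{k_2}$, so the intervals $(y_k^f-\tau_k,y_k^f+\tau_k)$ are pairwise disjoint and no common $y^*$ exists even for $d=2$. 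Mere convergence $y_k^f\to y_0$ is therefore not enough. The paper's crucial extra ingredient is to arrange (via a bijection $\gamma:\N\to\Q^p$ built into $f_0$) that for \emph{every} $g\in\mathscr{U}$ the set of topologically critical values $\{g(q_\alpha')\}$ is \emph{dense} in the chart $V$ (\cref{thm:u}\,\cref{thm:u_dense}); density then drives an inductive, not pigeonhole, choice of indices (\cref{thm:summary}): once $\alpha_j$ and its tolerance $r'_{\alpha_j}$ are fixed, density supplies $\alpha_{j+1}$ with value inside the already known ball, and the final target is $g(q'_{\alpha_d})$, which is never moved.

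There is a second gap in your non-$C^0$-stability deduction. You need $g_d\in\mathscr{W}$ with $N_{d+1}(g_d)=0$, but $\mathscr{U}$ places no constraint on $g$ away from the charts $V_k$; your own $f_0$ is constant on the (open, non-empty) complement of $\bigcup_k V_k$, so $f_0$ already has uncountably many topologically critical points over $y_0$, and a ``further generic small perturbation'' within $\mathscr{W}$ does not obviously remove all such high-multiplicity fibres. The paper avoids needing any upper bound on multiplicities: it uses only that $\mathscr{U}_{p+1}$ is dense together with the classical density of maps with normal crossings in $C^\infty(N,P)$, and derives the contradiction from the fact that a normal-crossings map cannot have $p+1$ (topologically) critical points sharing a value.
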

In \cref{thm:main_a}, when we show that any mapping in $\mathscr{U}$ is not $C^0$-stable, we use the simple fact that topologically critical points are preserved by homeomorphisms as mentioned in \cref{rem:top}\cref{rem:top_p}, which is an advantage of topologically critical points compared to usual critical points.

In what follows, for a given positive integer $m$, we denote the origin $(0\ld 0)$ of $\R^m$ by $0$, the Euclidean norm of $x\in \R^m$ by $\|x\|$, and the $m$-dimensional open ball with center $x\in \R^m$ and radius $r>0$ by $B^m(x,r)$, that is,
\begin{align*}
B^m(x,r)=\set{x'\in \R^m|\|x-x'\|<r}.
\end{align*}
For a set (resp. a topological space) $X$ and a subset $A$ of $X$, we denote the complement of $A$ (resp. the closure of $A$) by $A^c$ (resp. $\overline{A}$).

The former assertion on a usual critical point of the following lemma is the same as \cite[Lemma~2.1]{Ichiki2022}, and we update the lemma by adding the latter assertion on topologically critical points as follows:
\begin{lemma}\label{thm:critical}
Let $f=(f_1\ld f_p):B^n(0,r)\to \R^p$ $(r>0)$ be a mapping such that 
\begin{align*}
    f_p(x)=\frac{1}{2}\sum_{i=1}^nx_i^2+a,
\end{align*} where $a$ is a real number and $x=(x_1\ld x_n)$.
If $g=(g_1\ld g_p):B^n(0,r)\to \R^p$ satisfies
\begin{align}\label{eq:d0}
\sqrt{\sum_{i=1}^n\left(\frac{\partial f_p}{\partial x_i}(x)-
    \frac{\partial g_p}{\partial x_i}(x)\right)^2}
    <\frac{r}{2}
\end{align}
for any $x\in B^n(0,r)$, then there exists a critical point $x_0$ of $g$ in $B^n(0,r)$.
Moreover, if $\Hess(g_p)_{x_0}$ is positive definite, then $x_0$ is a topologically critical point of $g$, where $\Hess(g_p)_{x_0}$ is the Hessian matrix of $g_p$ at $x_0$.
\end{lemma}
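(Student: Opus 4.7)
The plan is to reduce the first assertion to Brouwer's fixed point theorem applied to a self-map built from $\nabla g_p$, and then to exploit the positive-definiteness of the Hessian to produce a strict local minimum of $g_p$ which obstructs $g$ from being open at $x_0$.

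Since $f_p(x)=\tfrac{1}{2}\sum_{i=1}^n x_i^2+a$, one has $\frac{\partial f_p}{\partial x_i}(x)=x_i$, so \eqref{eq:d0} simplifies to $\|x-\nabla g_p(x)\|<r/2$ for every $x\in B^n(0,r)$. I would then introduce the map $\phi:\overline{B^n(0,r/2)}\to\R^n$ defined by $\phi(x)=x-\nabla g_p(x)$. Since $\overline{B^n(0,r/2)}\subset B^n(0,r)$, the hypothesis yields $\|\phi(x)\|<r/2$ on this closed ball, so $\phi$ sends $\overline{B^n(0,r/2)}$ into itself. Brouwer's fixed point theorem then produces $x_0$ with $\phi(x_0)=x_0$, equivalently $\nabla g_p(x_0)=0$; since the last row of $dg_{x_0}$ vanishes, $\rank dg_{x_0}<p$, and $x_0\in B^n(0,r)$ is a critical point of $g$. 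This portion coincides essentially with \cite[Lemma~2.1]{Ichiki2022}.

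For the new assertion, assume $\Hess(g_p)_{x_0}$ is positive definite. Combined with $\nabla g_p(x_0)=0$, the second-derivative test yields an open neighborhood $W$ of $x_0$ on which $g_p(x)>g_p(x_0)$ for every $x\in W\setminus\{x_0\}$. For an arbitrary open neighborhood $U$ of $x_0$ in $B^n(0,r)$, the set $V=U\cap W$ is open and nonempty, and $g(V)$ is trapped in the closed half-space $\{y\in\R^p:y_p\geq g_p(x_0)\}$ yet contains $g(x_0)$, which lies on the bounding hyperplane $y_p=g_p(x_0)$. Hence $g(V)$ is not open in $\R^p$, so $g|_U$ fails to be an open mapping; since $U$ was arbitrary, $x_0$ is a topologically critical point of $g$.

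The main subtlety is pinning down the correct sub-ball for Brouwer: the constant $r/2$ in \eqref{eq:d0} is precisely what forces $\phi$ to map $\overline{B^n(0,r/2)}$ into itself, so no larger ball would work. The second step is largely formal, but it relies crucially on \emph{strict} positive-definiteness of the Hessian, which is what guarantees that $g(x_0)$ lies on the bounding hyperplane of the trapping half-space rather than in its interior; without this the openness argument would collapse.
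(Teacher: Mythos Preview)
Your proof is correct and follows essentially the same route as the paper: the first assertion is reduced to a fixed point of $x\mapsto x-\nabla g_p(x)$ on $\overline{B^n(0,r/2)}$ (exactly the content of \cite[Lemma~2.1]{Ichiki2022}), and the second assertion exploits that $x_0$ is a strict local minimum of $g_p$ so that the image of any small neighborhood lies in the half-space $\{y_p\geq g_p(x_0)\}$ and cannot be open. The only cosmetic difference is that the paper invokes the Morse lemma to put $g_p$ into the normal form $g_p(x_0)+t_1^2+\cdots+t_n^2$, whereas you use the second-derivative test directly; both yield the same strict-minimum conclusion.
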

\begin{proof}[Proof of \cref{thm:critical}]
By the same proof as \cite[Lemma~2.1]{Ichiki2022}, it follows that there exists a critical point $x_0$ of $g$ in $B^n(0,r)$ such that $\frac{\partial g_p}{\partial x_i}(x_0)=0$ for any $i\in \set{1\ld n}$.

Suppose that $\Hess(g_p)_{x_0}$ is positive definite.
Since $x_0$ is also a critical point of $g_p$, there exists a coordinate neighborhood $(U,\varphi)$ of $B^n(0,r)$ containing $x_0$ ($\varphi(x_0)=0\in \R^n$) such that $g_p\circ \varphi^{-1}:\varphi(U)\to \R$ has the following form:
\begin{align*}
        (g_p\circ \varphi^{-1})(t_1\ld t_n)=g_p(x_0)+t_1^2+\cdots +t_n^2,
\end{align*}
which implies that $x_0$ is a topologically critical point of $g$.
\end{proof}
%%%%%%%%%%%%%%%%%%%%%%%%%%%%%%%%%%%%%%%%%%%%%%%%%%    
\section{Proof of the main theorem}\label{sec:proof}
%%%%%%%%%%%
%%%%%%%%%%%%%%%%%%%%%%%%%%%%%%%%%%
The proof is divided into the following three steps:
In STEP~1, we construct the $C^\infty$ mapping $f:N\to P$ defined by \eqref{eq:f}.
In STEP~2, we construct the open neighborhood $\mathscr{U}$ of $f$ in $C^\infty(N,P)$ given by \eqref{eq:u}, and we provide a lemma on properties of a mapping in $\mathscr{U}$ (see \cref{thm:u}).
Finally, in STEP~3, after preparing two lemmas (\cref{thm:contain,thm:summary}), we show that $\mathscr{U}_d$ is dense in $\mathscr{U}$, and by using this assertion and the simple fact that topologically critical points are preserved by homeomorphisms, we prove that any mapping in $\mathscr{U}$ is not $C^0$-stable.

The method of the proof is almost the same as that of \cref{thm:main2}, but the most important difference is the addition of condition (c) in the definition of $O_\al$ in \cref{eq:o} of STEP~2 to deal with topological critical points instead of ordinary critical points (more precisely, to use \cref{thm:critical}).
In fact, in the proof of \cref{thm:main2}, there are only conditions (a) and (b) in the definition of $O_\al$, and the open set $\mathscr{U}$ in \cref{eq:u} is defined by using an open subset of the 1-jet space $J^1(N,P)$.
On the other hand, in this proof, since we add condition (c), we define $\mathscr{U}$ in \cref{eq:u} using an open set in the 2-jet space $J^2(N,P)$.

STEP 1 is the same as that in \cref{thm:main2}, but is also described in this paper for the sake of readers' convenience since in this first step we introduce some symbols that will be subsequently used.

\smallskip 
\underline{STEP~1}.
Set $n=\dim N$, $p=\dim P$ and $\ell=2n+1$.
By Whitney's embedding theorem, there exist an embedding $F:N\to \R^\ell$ such that $F(N)$ is a closed subset of $\R^\ell$.
Then, there exists a point $z_0\in \R^\ell\setminus F(N)$.
Since $N$ is non-compact, $F(N)$ is also non-compact.
Thus, $F(N)$ is not bounded, which implies that there exists a sequence $\set{R_\al}_{\al\in \N}$ of positive real numbers and a sequence $\set{z_\al}_{\al\in \N}$ of points in $\R^\ell$ such that 
\begin{itemize}
    \item 
    $R_\al<R_{\al+1}$ for any $\al\in \N$ and $\D\lim_{\al \to \infty}R_\al=\infty$,
    \item
    $z_\al\in F(N)\cap (B^\ell(z_0,R_{\al+1})\setminus \overline{B^\ell(z_0,R_{\al})})$ for any $\al\in \N$.
\end{itemize}
Let $\al$ be a positive integer.
Set $q_\al=F^{-1}(z_\al)$.
Here, note that 
\begin{align*}
    F^{-1}(B^\ell(z_0,R_{\al+1})\setminus \overline{B^\ell(z_0,R_{\al})})
\end{align*}
    is an open neighborhood of $q_\al$.
Then, there exists a coordinate neighborhood $(U_\al,\varphi_\al)$ of $N$ with the following properties:
\begin{itemize}
    \item 
    $\overline{U_\al}$ is compact,
    \item
    $q_\al\in U_\al\subset  F^{-1}(B^\ell(z_0,R_{\al+1})\setminus \overline{B^\ell(z_0,R_{\al})})$,
    \item
    $\varphi_\al(q_\al)=0\in \R^n$.
\end{itemize}
Moreover, there exist an open neighborhood $U_\al'$ of $q_\al$ and $\rho_\al:N\to \R$ ($0\leq \rho_\al(q)\leq 1$) such that 
\begin{itemize}
    \item 
    $\overline{U_\al'}\subset U_{\al}$,  
    \item
    $\rho_\al(q)=1$ for any $q\in \overline{U_\al'}$,
    \item
    $\supp \rho_\al\subset U_\al$,
\end{itemize}
where $\supp \rho_\al=\overline{\set{q\in N|\rho_\al(q)\not=0}}$.
Notice that $\supp \rho_\al$ is compact since $\overline{U_\al}$ is compact.
By choosing $U_\al'$ smaller for each $\al\in \N$ we can assume that 
\begin{itemize}
    \item 
    $\varphi_\al(U_\al')=B^n(0,r_\al)$,
    \item 
    $\D\lim_{\al \to \infty}r_\al=0$,
\end{itemize}
where each $r_\al$ is a positive real number.

Let $\gamma=(\gamma_1\ld \gamma_p):\N\to \Q^p$ be a bijection, and let $\eta_\al:\varphi_\al(U_\al)\to \R^p$ be the mapping defined by 
\begin{align*}
\eta_\al(x)=
\D\left(\gamma_1(\al)\ld \gamma_{p-1}(\al),\frac{1}{2}\sum_{i=1}^nx_i^2+\gamma_{p}(\al)\right)
\end{align*}
for each $\al\in \N$, where $x=(x_1\ld x_n)$.
Let $(V,\psi)$ be a coordinate neighborhood of $P$ that satisfies $\psi(V)=\R^p$.
Since $U_\al\cap U_\beta=\varnothing$ if $\al\not=\beta$, we can define $f:N\to P$ as follows:
\begin{align}\label{eq:f}
f(q)=\left\{ \begin{array}{ll}
\D\psi^{-1}(\rho_\al(q)(\eta_\al\circ \varphi_\al)(q)) & \mbox{if $q\in U_\al$}, 
\\
\\ 
\D\psi^{-1}(0) & \mbox{if $q\not\in \bigcup_{\al\in \N}U_\al$}.
\end{array} \right.
\end{align}
We show that $f$ is of class $C^\infty$.
Let $q\in N$ be any point.
If $q\in \bigcup_{\al\in \N}U_\al$, then by the definition of $f$ it is clear that $f$ is of class $C^\infty$ at $q$.
Thus, we consider the case $q\in (\bigcup_{\al\in \N}U_\al)^c$.
Since $\D\lim_{\al\to \infty}R_\al=\infty$, there exists $\beta \in\N$ such that $q\in F^{-1}(B^\ell(z_0,R_\beta))$.
For simplicity, set
\begin{align*}
A=F^{-1}(B^\ell(z_0,R_\beta))\cap \left(\bigcup_{\al\in \N}\supp \rho_\al\right)^c.
\end{align*}
Note that $q\in A$.
Since $R_\al<R_{\al+1}$ for any $\al\in \N$, we have
\begin{align*}
F^{-1}(B^\ell(z_0,R_\beta))\subset (\supp \rho_\al)^c
\end{align*}
for each $\al\in \N$ satisfying $\al>\beta$.
Thus, we obtain 
\begin{align*}
    A&=F^{-1}(B^\ell(z_0,R_\beta))\cap \left(\bigcap_{\al\in \N}(\supp \rho_\al)^c\right)
    \\
    &=F^{-1}(B^\ell(z_0,R_\beta))\cap \left(\bigcap_{\al\leq \beta}(\supp \rho_\al)^c\right),
\end{align*}
which implies that $A$ is an open subset of $N$.
Since $\rho_\al |_{A}$ is a constant function with a constant value $0$ for each $\al\in \N$, the mapping $f|_A$ is also constant.
Thus, $f$ is of class $C^\infty$ at $q$.

\smallskip 
\underline{STEP~2}.
In this step, we construct the open neighborhood $\mathscr{U}$ of $f$ in $C^\infty(N,P)$ given by \eqref{eq:u}, and we provide a lemma on properties of a mapping in $\mathscr{U}$.
Since $z_0\in \R^\ell\setminus F(N)$, we can define the following continuous function $\delta:N\to \R$:
\begin{align*}
    \delta(q)=\frac{1}{\norm{F(q)-z_0}}.
\end{align*}
Let $\pi:J^2(N,P)\to N\times P$ be the natural projection defined by $\pi(j^2g(q))=(q,g(q))$.
Then, for each $\al\in \N$, set 

\begin{align}\label{eq:o}
    O_\al=\set{j^2g(q)\in \pi^{-1}(U_\al\times V)|\mbox{$j^2g(q)$ satisfies \cref{eq:t}, \cref{eq:d} and \cref{eq:hess}}},
\end{align}where

\begin{enumerate}[(a)]
    \item
    $\norm{(\psi\circ f)(q)-(\psi\circ g)(q)}<\delta(q)$\label{eq:t},
    \\ 
    \item
    $\D\sqrt{\sum_{i=1}^n\left(\frac{\partial (\psi_p\circ f\circ \varphi_\al^{-1})}{\partial x_i}(\varphi_\al(q))-
    \frac{\partial (\psi_p\circ g\circ \varphi_\al^{-1})}{\partial x_i}(\varphi_\al(q))\right)^2}<\frac{r_\al}{2}$,
    \label{eq:d} 
    \\ 
    \item 
    \mbox{$\Hess(\psi_p\circ g\circ \varphi_\al^{-1})_{\varphi_\al(q)}$ is positive definite,}\label{eq:hess}\end{enumerate}
where $\psi_p$ is the $p$-th component of $\psi$.
From \cref{eq:t}, \cref{eq:d} and \cref{eq:hess}, it is not hard to see that $O_\al$ is an open subset of $J^2(N,P)$.

We prove that $\bigcap_{\al\in \N}(\overline{U_\al'})^c$ is an open subset of $N$.
Let $q\in \bigcap_{\al\in \N}(\overline{U_\al'})^c$ be any point.
Since $\D\lim_{\al\to \infty}R_\al=\infty$, there exists $\beta\in \N$ such that $q\in F^{-1}(B^\ell(z_0,R_\beta))$.
Since $R_\al<R_{\al+1}$ for each $\al\in \N$, we obtain $F^{-1}(B^\ell(z_0,R_\beta))\subset (\overline{U_\al'})^c$ for any $\al\in \N$ satisfying $\al>\beta$, which implies that 
\begin{align*}
    F^{-1}(B^\ell(z_0,R_\beta))\cap 
    \left(\bigcap_{\al\leq \beta}(\overline{U_\al'})^c\right)\subset \bigcap_{\al\in \N}(\overline{U_\al'})^c.
\end{align*}
Since the left-hand side of the above expression is an open neighborhood of $q$, it follows that $\bigcap_{\al\in \N}(\overline{U_\al'})^c$ is open.
Thus, since $\pi$ is continuous, 
\begin{align*}
    O:=\left(\bigcup_{\al\in \N}O_\al\right)\cup \pi^{-1}\left(\left(\bigcap_{\al\in \N}\left(\overline{U_\al'}\right)^c\right)\times V\right)
\end{align*}
is open in $J^2(N,P)$. 
Therefore, we can construct the following open subset of $C^\infty(N,P)$:
\begin{align}\label{eq:u}
    \mathscr{U}:=\set{g\in C^\infty(N,P)|j^2g(N)\subset O}.
\end{align}
By showing that $j^2f(N)\subset O$, we will prove that $\mathscr{U}\not=\varnothing$.
Let $j^2f(q)$ $(q\in N)$ be any element of $j^2f(N)$.
If there exists $\al \in \N$ such that $q\in U_\al$, then $j^2f(q)\in O_\al$ $(\subset O)$ since $f(q)\in V$ and $j^2f(q)$ clearly satisfies \cref{eq:t}, \cref{eq:d} and \cref{eq:hess}.
When $q\not\in \bigcup_{\al\in \N}U_\al$,  since \begin{align}\label{eq:n}
    N=\left(\bigcup_{\al\in \N}U_\al\right) 
    \cup
    \left(\bigcap_{\al\in \N}\left(\overline{U_\al'}\right)^c\right),
\end{align}
it must follow that $q\in \bigcap_{\al\in \N}\left(\overline{U_\al'}\right)^c$.
Therefore, since $f(q)\in V$, we obtain 
\begin{align*}
    j^2f(q)\in \pi^{-1}\left(\left(\bigcap_{\al\in \N}(\overline{U_\al'})^c\right)\times V\right)\ (\subset O),
\end{align*}
which implies that $\mathscr{U}\not=\varnothing$.

The following lemma describes properties of a mapping in $\mathscr{U}$ and it is an upgrade of \cite[Lemma~3.1]{Ichiki2022} to a claim about a topological critical point instead of a usual critical point.
\begin{lemma}\label{thm:u}
For any mapping $g\in \mathscr{U}$, we have $g(N)\subset V$ and there exists a sequence $\set{q_\al'}_{\al\in \N}$ of points in $N$ with the following properties.
\begin{enumerate}[$(1)$]
    \item \label{thm:u_critical}
    For each $\al \in \N$, $q_\al'$ is a topologically critical point of $g$ in $U_\al'$.
    \item \label{thm:u_dense}
    The set $\set{g(q_\al')|\al\in\N}$ is dense in $V$.
\end{enumerate}
\end{lemma}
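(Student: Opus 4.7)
The plan is to use the three defining conditions of $O_\al$ in direct succession: \cref{eq:t} controls the image of $g$, \cref{eq:d} feeds into \cref{thm:critical} to produce a critical point, and \cref{eq:hess} combined with the ``moreover'' part of the same lemma upgrades that critical point to a topological one. The inclusion $g(N)\subset V$ will be immediate from the definition of $O$: for every $q\in N$, the jet $j^2g(q)$ either lies in some $O_\al\subset \pi^{-1}(U_\al\times V)$ or in $\pi^{-1}((\bigcap_{\al\in\N}(\overline{U_\al'})^c)\times V)$, and in either case $g(q)\in V$.

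For assertion \cref{thm:u_critical}, I would fix $\al\in\N$ and first observe that whenever $q\in U_\al'$, one has $q\in \overline{U_\al'}$, so $q\notin \bigcap_{\beta\in\N}(\overline{U_\beta'})^c$; hence $j^2g(q)\in O_\beta$ for some $\beta$, and pairwise disjointness of the $U_\beta$'s forces $\beta=\al$. Thus all three conditions \cref{eq:t}, \cref{eq:d}, \cref{eq:hess} hold at every point of $U_\al'$. Because $\rho_\al\equiv 1$ on $U_\al'$ and $\varphi_\al(U_\al')=B^n(0,r_\al)$, $\psi\circ f\circ\varphi_\al^{-1}=\eta_\al$ on $B^n(0,r_\al)$, whose $p$-th component has the form $\frac{1}{2}\sum_{i=1}^n x_i^2+\gamma_p(\al)$. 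Then \cref{eq:d} is precisely the hypothesis of \cref{thm:critical} applied to $\eta_\al$ and $\psi\circ g\circ\varphi_\al^{-1}|_{B^n(0,r_\al)}$, yielding a critical point $x_0\in B^n(0,r_\al)$ of the latter. Setting $q_\al':=\varphi_\al^{-1}(x_0)\in U_\al'$, condition \cref{eq:hess} at $q_\al'$ reads that $\Hess(\psi_p\circ g\circ\varphi_\al^{-1})_{x_0}$ is positive definite, so the ``moreover'' clause of \cref{thm:critical} yields that $x_0$ is a topologically critical point of $\psi\circ g\circ\varphi_\al^{-1}$; since $\varphi_\al$ and $\psi$ are diffeomorphisms, $q_\al'$ is a topologically critical point of $g$.

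For assertion \cref{thm:u_dense}, the strategy is to estimate $\psi(g(q_\al'))$ against the rational target $\gamma(\al)\in\Q^p$. Condition \cref{eq:t} at $q_\al'$ provides $\norm{(\psi\circ f)(q_\al')-(\psi\circ g)(q_\al')}<\delta(q_\al')=1/\norm{F(q_\al')-z_0}$, and since $q_\al'\in U_\al\subset F^{-1}(B^\ell(z_0,R_{\al+1})\setminus\overline{B^\ell(z_0,R_\al)})$, the right-hand side is less than $1/R_\al$. Meanwhile $(\psi\circ f)(q_\al')=\eta_\al(x_0)$ differs from $\gamma(\al)$ only in the last coordinate, by $\tfrac{1}{2}\norm{x_0}^2<\tfrac{1}{2}r_\al^2$. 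Combining gives $\norm{(\psi\circ g)(q_\al')-\gamma(\al)}<1/R_\al+r_\al^2/2$, which tends to $0$ as $\al\to\infty$. For any $y\in\R^p=\psi(V)$ and any $\ep>0$, density of $\Q^p$ in $\R^p$ together with bijectivity of $\gamma$ furnishes infinitely many indices $\al$ with $\norm{\gamma(\al)-y}<\ep/2$, and among these I pick one large enough that $1/R_\al+r_\al^2/2<\ep/2$, obtaining $\norm{(\psi\circ g)(q_\al')-y}<\ep$; this proves density of $\{g(q_\al')\}_{\al\in\N}$ in $V$. The one place to be careful, and in fact the only real obstacle, is the short argument that for $q\in U_\al'$ the jet $j^2g(q)$ is genuinely forced into $O_\al$ rather than into the complement piece of $O$, so that all three conditions are simultaneously available at $q_\al'$; the remainder is a clean composition of \cref{thm:critical} with the explicit structure of $f$ constructed in STEP~1.
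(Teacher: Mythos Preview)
Your proposal is correct and follows essentially the same approach as the paper's proof: the argument that $j^2g(q)\in O_\al$ for $q\in U_\al'$, the application of \cref{thm:critical} via conditions \cref{eq:d} and \cref{eq:hess}, and the density estimate are all the same. The only cosmetic difference is that the paper splits the triangle inequality into three pieces bounded separately by $\ep/3$ (with intermediate point $(\psi\circ f)(q_\al)=\gamma(\al)$), whereas you combine the first two error terms into a single quantity $1/R_\al+r_\al^2/2\to 0$ and then exploit that infinitely many $\al$ satisfy $\norm{\gamma(\al)-y}<\ep/2$; this is the same computation reorganized.
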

\begin{proof}[Proof of \cref{thm:u}]
From the definition of $\mathscr{U}$, we have  $g(N)\subset V$.

Let $\al$ be any positive integer.
Then, we have 
\begin{align*}
    (\psi_p\circ f\circ \varphi_\al^{-1})(x)=\frac{1}{2}\sum_{i=1}^nx_i^2+\gamma_p(\al)
\end{align*}
for any $x=(x_1\ld x_n)\in \varphi_\al(U_\al')$ $(=B^n(0,r_\al))$.
For any $q\in U_\al'$, we obtain $j^2g(q)\in O_\al$ since we have \cref{eq:n} and $U_\al'$ is contained in $U_\al$ which does not intersect with $U_\beta$ $(\beta\not=\al)$.
Hence, $(\psi_p\circ g\circ \varphi_\al^{-1})|_{B^n(0,r_\al)}$ satisfies \cref{eq:d} and \cref{eq:hess}, which implies that there exists a topologically critical point of $(\psi_p\circ g\circ \varphi_\al^{-1})|_{B^n(0,r_\al)}$ in $B^n(0,r_\al)$ by \cref{thm:critical}.
Namely, there exists a topologically critical point of $g$ in $U_\al'$.
We denote this point by $q_\al'$.

Since $\set{q_\al'}_{\al\in \N}$ satisfies \cref{thm:u_critical}, it is sufficient to prove that the sequence of points also satisfies \cref{thm:u_dense}.
Let $V'$ be any open subset of $V$.
We show that $\set{g(q_\al')|\al\in \N}\cap V'\not=\varnothing$.
Then, by choosing $V'$ smaller, we can assume that $\psi(V')=B^p(y_0,\ep)$, where $y_0$ is a point of $\R^p$ and $\ep$ is a positive real number.
Note that for each $\al\in \N$, we have 
\begin{equation}
\begin{split}\label{eq:al_0}
    \norm{(\psi\circ g)(q_\al')-y_0}\leq 
    &\norm{(\psi\circ g)(q_\al')-(\psi\circ f)(q_\al')}
    +  \\
    &\norm{(\psi\circ f)(q_\al')-(\psi\circ f)(q_\al)}
  +
    \norm{(\psi\circ f)(q_\al)-y_0}.
   \end{split}
\end{equation}

Since 
\begin{align*}
    \delta(q_\al')=\frac{1}{\norm{F(q_\al')-z_0}}<\frac{1}{R_\al}
\end{align*}
for any $\al\in \N$ and $\D\lim_{\al\to \infty}R_\al=\infty$, there exists $\al_1\in \N$ such that $\delta(q_\al')<\frac{\ep}{3}$ for any $\al\in \N$ satisfying $\al\geq \al_1$.
Here, note that for each $\al\in \N$, we get 
\begin{align*}
        \norm{(\psi\circ g)(q_\al')-(\psi\circ f)(q_\al')}<\delta(q_\al')
\end{align*}
by \cref{eq:t} since $j^2g(q_\al')\in O_\al$.
Thus, it follows that for any $\al\in \N$,
\begin{align}\label{eq:al_1}
    \al\geq \al_1 \Longrightarrow  \norm{(\psi\circ g)(q_\al')-(\psi\circ f)(q_\al')}<\frac{\ep}{3}.
\end{align}

For each $\al\in \N$, since $q_\al, q_\al'\in U_\al'$, we have 
\begin{align*}
     \norm{(\psi\circ f)(q_\al')-(\psi\circ f)(q_\al)}=
     \norm{\eta_\al(\varphi_\al(q_\al'))-\gamma(\al)}
     = \frac{\norm{\varphi_\al(q_\al')}^2}{2}
     < \frac{r_\al^2}{2}.
\end{align*}
Since $\D\lim_{\al\to \infty}r_\al=0$, there exists $\al_2\in \N$ such that for any $\al\in \N$,
\begin{align}\label{eq:al_2}
    \al\geq \al_2 \Longrightarrow  \norm{(\psi\circ f)(q_\al')-(\psi\circ f)(q_\al)}<\frac{\ep}{3}.
\end{align}

Since $(\psi \circ f)(q_\al)=\gamma(\al)$ for each $\al\in \N$, we have 
\begin{align*}
    \set{(\psi \circ f)(q_\al)|\al\in\N}=\Q^p.
\end{align*}
Hence, there exists $\al_3\in \N$ such that $\al_3>\max\set{\al_1,\al_2}$ and 
\begin{align}\label{eq:al_3}
    \norm{(\psi\circ f)(q_{\al_3})-y_0}<\frac{\ep}{3}.
\end{align}
Thus, we get $\norm{(\psi\circ g)(q_{\al_3}')-y_0}<\ep$ by \eqref{eq:al_0} to \cref{eq:al_3}, which implies that $g(q_{\al_3}')\in V'$.
\end{proof}
\smallskip 
\underline{STEP~3}.
The purpose of this step is to show that $\mathscr{U}_d$ is dense in $\mathscr{U}$, where $d\geq 2$ is a given integer.
Let $g\in \mathscr{U}$ be an arbitrary mapping, and let $\mathscr{U}_g$ be any open neighborhood of $g$.
Then, there exist a non-negative integer $k$ and an open set $O'$ of $J^k(N,P)$ such that 
\begin{align*}
    g\in \set{h\in C^\infty(N,P)|j^kh(N)\subset O'}\subset \mathscr{U}_g.
\end{align*}
For the proof, it is sufficient to show that there exists a mapping $h\in \mathscr{U}_d$ such that $j^kh(N)\subset O'$.

For any $\al\in \N$ and $c\in \R^p$, let $G_{\al,c}:N\to P$ be the mapping defined by 
\begin{align*}
    G_{\al,c}=\psi^{-1}\circ (\psi \circ g+\rho_\al c).
\end{align*}
\begin{lemma}[{\cite[Lemma~3.2]{Ichiki2022}}]\label{thm:contain}
Let $\al$ be any positive integer.
Then, there exists a positive real number $r_\al'$ such that $j^kG_{\al,c}(N)\subset O'$ for any $c\in B^p(0,r_\al')$.
\end{lemma}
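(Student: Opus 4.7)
The plan is a continuity-plus-compactness argument that exploits the compact support of $\rho_\al$. First I would note that $G_{\al,c}$ is well defined on all of $N$ for every $c\in \R^p$, since $g(N)\subset V$ by \cref{thm:u} and $\psi(V)=\R^p$. Because $\rho_\al$ vanishes identically on the open set $N\setminus \supp \rho_\al$, the mapping $G_{\al,c}$ coincides with $g$ there, so $j^kG_{\al,c}(q)=j^kg(q)\in O'$ for every $q\in N\setminus \supp \rho_\al$ and every $c\in \R^p$, using the hypothesis $g\in \set{h\in C^\infty(N,P)|j^kh(N)\subset O'}$.

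Next I would introduce the evaluation map $\Phi:N\times \R^p\to J^k(N,P)$ defined by $\Phi(q,c)=j^kG_{\al,c}(q)$. In local coordinates around any point $q$, the partial derivatives of $\psi\circ G_{\al,c}=\psi\circ g+\rho_\al c$ up to order $k$ depend polynomially (in particular, continuously) on $c$, and $\psi^{-1}$ is smooth on its entire image, so $\Phi$ is jointly continuous on $N\times \R^p$. At $c=0$ we have $\Phi(q,0)=j^kg(q)\in O'$ for every $q\in N$. For each $q\in \supp \rho_\al$, openness of $O'$ then yields an open neighborhood $W_q$ of $q$ and some $s_q>0$ with $\Phi(W_q\times B^p(0,s_q))\subset O'$.

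Finally I would cover the compact set $\supp \rho_\al$ by finitely many $W_{q_1},\ldots,W_{q_m}$ and set $r_\al'=\min\{s_{q_1},\ldots,s_{q_m}\}>0$. Then for every $c\in B^p(0,r_\al')$ and every $q\in \supp \rho_\al$ we have $\Phi(q,c)\in O'$, which combined with the first paragraph yields $j^kG_{\al,c}(N)\subset O'$, as required.

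The only non-mechanical step is the joint continuity of $\Phi$ at points of $\supp \rho_\al$, but this is routine because $\rho_\al c$ is linear in $c$ and the derivative operations extracting a $k$-jet are continuous in the $C^k$-topology on the corresponding coordinate patch. I do not anticipate a serious obstacle; the whole argument is essentially the parametric extension of the trivial fact that $j^kG_{\al,0}=j^kg\in O'$, made quantitative by the compactness of $\supp \rho_\al$.
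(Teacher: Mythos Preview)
Your argument is correct. The paper itself supplies no proof of this lemma, simply citing \cite[Lemma~3.2]{Ichiki2022}; your continuity-plus-compactness argument (split $N$ into $N\setminus\supp\rho_\al$, where $G_{\al,c}=g$, and the compact set $\supp\rho_\al$, where the jointly continuous map $(q,c)\mapsto j^kG_{\al,c}(q)$ lands in $O'$ for $c$ small) is the standard route and is undoubtedly what the cited reference does as well.
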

Since $g\in \mathscr{U}$, note that there exists a sequence $\set{q_\al'}_{\al\in \N}$ of points in $N$ satisfying \cref{thm:u_critical} and \cref{thm:u_dense} of \cref{thm:u}.
The following lemma can be also shown by the same method as \cite[Lemma~3.3]{Ichiki2022}.
\begin{lemma}\label{thm:summary}
Let $m$ be any positive integer.
Then, there exist $(m+1)$ distinct positive integers $\al_1\ld \al_{m+1}$ and $m$ positive real numbers $r_{\al_1}'\ld r_{\al_m}'$ $(r_{\al_1}'>\cdots >r_{\al_m}')$ such that for any $j\in \set{1\ld m}$,
\begin{enumerate}[$(1)$]
    \item 
    $j^kG_{\al_j,c}(N)\subset O'$ for any $c\in B^p(0,r_{\al_j}')$,
    \item 
    $\norm{(\psi\circ g)(q_{\al_{j+1}}')-(\psi\circ g)(q_{\al_{j}}')}<\D\frac{r_{\al_{j}}'}{d-1}$, where $d\geq 2$ is given in \cref{thm:main_a}.  
\end{enumerate}
\end{lemma}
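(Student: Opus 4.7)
The plan is to construct the indices $\al_1, \ldots, \al_{m+1}$ and the radii $r_{\al_1}' > \cdots > r_{\al_m}'$ by iterating $m$ times, alternating between two ingredients: \cref{thm:contain}, which produces a valid radius once an index is chosen, and \cref{thm:u}\cref{thm:u_dense}, which produces the next index once a radius is given.

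For the base step, pick any $\al_1 \in \N$ and apply \cref{thm:contain} to obtain $r_{\al_1}' > 0$ satisfying condition $(1)$. Inductively, suppose distinct indices $\al_1, \ldots, \al_j$ and strictly decreasing positive radii $r_{\al_1}' > \cdots > r_{\al_j}'$ have been produced so that $(1)$ holds at each of them. Since $\set{g(q_\al')|\al\in \N}$ is dense in $V$ by \cref{thm:u}\cref{thm:u_dense}, the set of $\al \in \N$ with $\|(\psi \circ g)(q_\al') - (\psi \circ g)(q_{\al_j}')\| < r_{\al_j}'/(d-1)$ is infinite, so we may pick $\al_{j+1}$ in this set distinct from $\al_1, \ldots, \al_j$; this choice realizes condition $(2)$ at index $j$. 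If $j+1 \le m$, apply \cref{thm:contain} to $\al_{j+1}$ to obtain a positive radius, and then shrink it so that $r_{\al_{j+1}}' < r_{\al_j}'$; condition $(1)$ is preserved by shrinking, since the relevant ball $B^p(0,r_{\al_{j+1}}')$ only becomes smaller. Iterating this procedure until $\al_{m+1}$ has been produced gives the desired data.

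The one point requiring care is the availability, at each step, of an index $\al_{j+1}$ that both satisfies the closeness condition and is distinct from the previously chosen ones. This follows from the observation that density of $\set{g(q_\al')|\al\in \N}$ in the positive-dimensional manifold $V$ forces the set of indices $\al$ whose image $g(q_\al')$ lies in any given non-empty open subset of $V$ to be infinite: otherwise, removing those finitely many values would yield a non-empty open subset of $V$ disjoint from $\set{g(q_\al')|\al\in \N}$, contradicting density. With this in hand the construction is a straightforward finite iteration, and both conclusions $(1)$ and $(2)$ hold by construction.
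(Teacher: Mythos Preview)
Your proof is correct and follows essentially the same inductive strategy as the paper: alternate between \cref{thm:contain} (to obtain a radius, shrunk if necessary to maintain strict decrease) and \cref{thm:u}\cref{thm:u_dense} (to obtain the next index satisfying the closeness condition). The one place where you go beyond the paper is in justifying why the new index $\al_{j+1}$ can be chosen distinct from $\al_1,\ldots,\al_j$; the paper simply asserts this, whereas you supply the density argument showing that infinitely many indices land in any non-empty open subset of $V$.
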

\begin{proof}[Proof of \cref{thm:summary}]
We will prove this lemma by induction on $m$.

Let $\al_1$ be a positive integer.
By \cref{thm:contain}, there exists a positive real number $r_{\al_1}'$ such that $j^kG_{\al_1,c}(N)\subset O'$ for any $c\in B^p(0,r_{\al_1}')$.
From \cref{thm:u}~\cref{thm:u_dense}, there exists $\al_2 \in \N\setminus\set{\al_1}$ satisfying
\begin{align*}
    \norm{(\psi\circ g)(q_{\al_2}')-(\psi\circ g)(q_{\al_1}')}<\frac{r_{\al_1}'}{d-1}.
\end{align*}
Thus, the case $m=1$ holds.

We assume that the lemma holds for $m=i$, where $i$ is a positive integer.
By \cref{thm:contain}, there exists a positive real number $r_{\al_{i+1}}'$ $(r_{\al_i}'>r_{\al_{i+1}}')$ such that $j^kG_{\al_{i+1},c}(N)\subset O'$ for any $c\in B^p(0,r_{\al_{i+1}}')$.
From \cref{thm:u}~\cref{thm:u_dense}, there exists $\al_{i+2}\in \N\setminus\set{\al_1\ld \al_{i+1}}$ satisfying $\norm{(\psi\circ g)(q_{\al_{i+2}}')-(\psi\circ g)(q_{\al_{i+1}}')}<\frac{r_{\al_{i+1}}'}{d-1}$.
Therefore, the case $m=i+1$ holds.
\end{proof}
For simplicity, set $I=\set{1\ld d-1}$.
By \cref{thm:summary} in the case $m=d-1$, there exist $d$ distinct positive integers $\al_1\ld \al_{d}$ and $d-1$ positive real numbers $r_{\al_1}'\ld r_{\al_{d-1}}'$ $(r_{\al_1}'>\cdots >r_{\al_{d-1}}')$ such that for any $j\in I$,
\begin{enumerate}[(a)]
\setcounter{enumi}{3}
    \item \label{thm:summary_contain}
    $j^kG_{\al_j,c}(N)\subset O'$ for any $c\in B^p(0,r_{\al_j}')$,
    \item \label{thm:summary_i}
    $\norm{(\psi\circ g)(q_{\al_{j+1}}')-(\psi\circ g)(q_{\al_{j}}')}<\D\frac{r_{\al_{j}}'}{d-1}$.
\end{enumerate}
Let $h:N\to P$ be the mapping defined by 
\begin{align*}
    h=\psi^{-1}\circ \left(\psi\circ g+\sum_{i=1}^{d-1}\rho_{\al_i}c_i\right),
\end{align*}
where $c_i=(\psi\circ g)(q_{\al_{d}}')-(\psi\circ g)(q_{\al_i}')\in \R^p$.

First, we show that $j^kh(N)\subset O'$.
Let $q\in N$ be an arbitrary point.
In the case where $q$ is an element of $(\bigcup_{j=1}^{d-1}\supp \rho_{\al_j})^c$, since $h=g$ on the open neighborhood $(\bigcup_{j=1}^{d-1}\supp \rho_{\al_j})^c$ of $q$, we have $j^kh(q)=j^kg(q)\in O'$.
We consider the case where there exists $j\in I$ satisfying  $q\in \supp \rho_{\al_j}$.
Since $\supp \rho_{\al_j}\subset \bigcap_{i\in I\setminus \set{j}}(\supp \rho_{\al_i})^c$ and $h=G_{\al_j,c_j}$ on the open neighborhood  $\bigcap_{i\in I\setminus \set{j}}(\supp \rho_{\al_i})^c$ of $q$, we obtain $j^kh(q)=j^kG_{\al_j,c_j}(q)$.
Moreover, since 
\begin{equation}
\begin{split}\label{eq:c}
    \norm{c_j}&=\norm{(\psi\circ g)(q_{\al_{d}}')-(\psi\circ g)(q_{\al_{j}}')}
    \\
    &\leq\sum_{i=j}^{d-1}\norm{(\psi\circ g)(q_{\al_{i+1}}')-(\psi\circ g)(q_{\al_{i}}')}
    \\
    &<\sum_{i=j}^{d-1}\frac{r_{\al_{i}}'}{d-1}
    \\
    &\leq r_{\al_{j}}',
\end{split}
\end{equation}
we have $c_j\in B^p(0,r_{\al_j}')$.
The last two inequalities in \cref{eq:c} follow from \cref{thm:summary_i} and $r_{\al_j}'>\cdots >r_{\al_{d-1}}'$, respectively.
Thus, we obtain $j^kG_{\al_j,c_j}(q)\in O'$ by \cref{thm:summary_contain}, which implies that $j^kh(q)\in O'$.

Now, we show that $h\in \mathscr{U}_d$.
For any $i,j\in I$, since $\rho_{\al_i}(q_{\al_j}')=\delta_{ij}$ and $\rho_{\al_i}(q_{\al_{d}}')=0$, we obtain{\small
\begin{align*}
    \left(\psi\circ g+\sum_{i=1}^{d-1}\rho_{\al_i}c_i\right)(q_{\al_j}')
    =(\psi\circ g)(q_{\al_j}')+c_j
    =(\psi\circ g)(q_{\al_{d}}')
    =(\psi\circ h)(q_{\al_{d}}'),
\end{align*}}where $\delta_{ij}$ is the Kronecker delta.
Thus, we have $h(q_{\al_1}')=\cdots= h(q_{\al_{d}}')$.
Moreover, for any $j\in I$, the point $q_{\al_j}'$  (resp. $q_{\al_{d}}'$) is a topologically critical point of $h$ since $h=\psi^{-1}\circ (\psi\circ g+c_j)$ on an open neighborhood of $q_{\al_j}'$ (resp. $h=g$ on an open neighborhood of $q_{\al_{d}}'$).
Therefore, we obtain $h\in \mathscr{U}_d$.

Finally, we will show that any mapping in $\mathscr{U}$ is not $C^0$-stable.
Suppose that there exists a $C^0$-stable mapping $\Tilde{g}$ in $\mathscr{U}$.
Then, there exists an open neighborhood $\mathscr{U}_{\Tilde{g}}$ of $\tilde{g}$ such that any mapping in $\mathscr{U}_{\Tilde{g}}$ is $C^0$-equivalent to $\Tilde{g}$.
Since we have shown that $\mathscr{U}_{p+1}$ is dense in $\mathscr{U}$, and topologically critical points are preserved by homeomorphisms as mentioned in \cref{rem:top}\cref{rem:top_p},  any mapping in $\mathscr{U}_{\Tilde{g}}$ has $(p+1)$ topologically critical points with the same image.
This contradicts the fact that the set of all mappings with normal crossings is dense in $C^\infty(N,P)$.
\QED

%%%%%%%%%%%%%%%
\section*{Acknowledgements}
The author would like to thank Toru Ohmoto for his kind comments.
This work was supported by JSPS KAKENHI Grant Number JP21K13786.
%%%%%%%%%%%%%%%%%%%%%%%%%%%%%%%%%%%%%%%%%%%%%%%%%% 

\end{document}